\keywords{Constraint satisfaction problems, Zermelo and Fraenkel set theory, non-measurable sets}
\def\fish{\mbox{\rm Fish}}
\def\head{\mbox{\rm Head}}
\def\tail{\mbox{\rm Tail}}
\begin{document}
\title[Compactness and non-measurable sets]{Constraint satisfaction problems, compactness and non-measurable sets}
\author{Claude Tardif\lmcsorcid{0000-0003-3855-989X}}
\address{Department of Mathematics and Computer Science,
Royal Military College of Canada,
Kingston, Ontario, Canada.}	
\email{Claude.Tardif@rmc.ca}  

\begin{abstract}
A finite relational structure $A$ is called {\em compact} if 
for any infinite
relational structure $B$ of the same type, the existence 
of a homomorphism
from $B$ to $A$ is equivalent to the existence of homomorphisms 
from all finite
substructures of $B$ to $A$. We show that if $A$ has width $1$, 
then the compactness
of $A$ can be proved in the axiom system of Zermelo and Fraenkel,
but otherwise,
the compactness of $A$ implies the existence of non-measurable sets 
in $3$-space.
\end{abstract}

\maketitle
\section{Introduction} \label{intro}

A graph is bipartite if and only if it does not contain an odd cycle.
Already in 1916, K\"{o}nig~\cite{konig} acknowledged that this theorem requires the axiom
of choice for the infinite case. One of the two possible colourings must be selected for each
connected component. Later in 1961, Mycielski~\cite{mycielski} proved that in the infinite case, K\"{o}nig's theorem 
is precisely equivalent to the axiom of choice for sets of pairs. Now, consider the statement that a digraph admits a homomorphism
to the transitive tournament $T_2$ on two vertices if and only if it admits no homomorphism from the
directed path $P_2$ with two consecutive arcs. Even in the infinite case, it requires no version of the
axiom of choice. Indeed, if there are no homomorphic images of $P_2$, every vertex is a source or a sink, possibly both, 
and we can define a homomorphism to $T_2$ by mapping all sources to the source of $T_2$ and all other vertices to the sink.

A finite relational structure $A$ is called {\em compact}
if the existence of a homomorphism from a structure $B$ to $A$ is equivalent to 
the existence of homomorphisms from all finite substructures of $B$ to $A$.
The axiom of choice implies the all-encompassing compactness theorem, which implies
that every  finite relational structure is compact.
More precisely, the compactness theorem is equivalent to the ultrafilter axiom, 
which states that any filter on a set can be extended to an ultrafilter. It is weaker
than the axiom of choice. In graph theory, the compactness 
of the complete graphs is called the de Bruijn-Erd\H{o}s theorem, after the 1951 result of de Bruijn and Erd\H{o}s~\cite{be}.
Later in 1971, L\"{a}uchli~\cite{lauchli} proved that for any $n \geq 3$, the compactness of the complete graph 
$K_n$ already implies the ultrafilter axiom. Thus, viewed as an axiom, the statement ``$K_3$ is compact'' is stronger than
``$K_2$ is compact'', and it is at least as strong as ``$A$ is compact'' for any finite relational structure $A$.
Recently, the following generalisation of L\"{a}uchli's result was obtained by 
K\'{a}tay, T\'{o}th, and Vidny\'{a}nszky.
\begin{thmC}[\cite{ktv}] \label{ultrafilterthm}
Let $A$ be a finite relational structure. Then the compactness of $A$ implies the ultrafilter axiom
if and only if $A$ does not admit a cyclic polymorphism.
\end{thmC}
\noindent
The ``cyclic polymorphisms'' involved will be defined in Section~\ref{overview}.
They provide the criterion separating the simple from the complex in the dichotomy 
of constraint satisfaction problems. For a given relational structure $A$, the corresponding 
{\em constraint satisfaction problem} $\mbox{CSP}(A)$
is the problem of deciding whether an input structure $B$ admits a homomorphism to $A$.
Bulatov~\cite{bulatov} and Zhuk~\cite{zhuk} independently proved that $\mbox{CSP}(A)$
is polynomial if $A$ admits a cyclic polymorphism, and NP-complete otherwise.
This dichotomy between tractable and intractable constraint satisfaction problems is
the one that had been conjectured by Feder and Vardi~\cite{federvardi}.
Note that in the context of algorithmic complexity, this is a conditional dichotomy,
as it relies on the hypothesis $\mbox{NP}\neq \mbox{P}$ which is not proved.
However, the dichotomy of Theorem~\ref{ultrafilterthm} is unconditional.
Thus the set-theoretic setting supports the hypothesis  $\mbox{NP}\neq \mbox{P}$;
the constraint satisfaction problems that are hypothetically hardest correspond
precisely to the compactness axioms that are provably the most general.

Now at the bottom end of the hierarchy, we find $T_2$, whose compactness does not rely on any axiom
outside of ZF, the axiom system of Zermelo and Fraenkel. The constraint satisfaction problem $\mbox{CSP}(T_2)$
is polynomial, and more precisely, it can be solved by the ``consistency check'' algorithm which will
be presented in Section~\ref{overview}. The consistency check algorithm is arguably the simplest algorithm
for constraint satisfaction, as it is rediscovered by many Sudoku puzzle solvers with no background in computer science.
A relational structure $A$ is said to have ``width $1$'' or ``tree duality'' if $\mbox{CSP}(A)$ can be solved 
by the consistency check algorithm. In~\cite{rtw}, it is shown that if $A$ has width $1$, then 
the statement ``$A$ is compact'' can be proved  in ZF without any
additional axiom. Once again this is a correspondence between algorithmic complexity and
set theory, connecting some of the simplest constraint satisfaction problems with some of the
simplest compactness results. Here we complete this correspondence as follows.
\begin{thm}\label{main}
Let $A$ be a relational structure. If $A$ does not have width $1$, then the axiom ``$A$ is compact''
implies the existence of a set in $\mathbb{R}^3$
that is not Lebesgue measurable.
\end{thm}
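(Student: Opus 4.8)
\emph{Sketch of proof.} The plan follows the template of L\"{a}uchli's theorem and of Theorem~\ref{ultrafilterthm}: construct an infinite instance $B$ of $\mathrm{CSP}(A)$, all of whose finite substructures admit a homomorphism to $A$, but for which any homomorphism $B\to A$ necessarily encodes set-theoretic data that is unavailable in a universe where every subset of $\mathbb{R}^3$ is Lebesgue-measurable. The failure of width~$1$ is what makes such a homomorphism ``informative'', and the relevant data is geometric, coming from the free subgroups of the rotation group.

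First I would reformulate the hypothesis. By the characterisation of tree duality (see \cite{federvardi, rtw}), $A$ has width~$1$ iff it admits totally symmetric polymorphisms of all arities, equivalently iff the power structure $\mathcal{P}(A)$ -- with universe the nonempty subsets of $A$ and each relation lifted coordinatewise -- maps homomorphically to $A$. Hence the failure of width~$1$ provides a finitary witness: an arity $n\ge 2$ and nonempty $S_1,\dots,S_n\subseteq A$ that are arc-consistent with respect to every relation of $A$, yet admit no jointly compatible selection $s_i\in S_i$. From this I would build, for each $k$, a finite gadget $\Gamma_k$ of the type of $A$ with $n$ distinguished variables such that (a)~$\Gamma_k\to A$, while (b)~in every homomorphism $\Gamma_k\to A$ the distinguished variables receive a value pattern that is not invariant under the full symmetric group on the $n$ positions -- so $\Gamma_k$ forces a genuine tie-breaking decision, precisely the kind of information the consistency-check algorithm cannot recover on its own. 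One can moreover keep each $\Gamma_k$ tree-like, so that assemblies of gadgets along trees remain tractable.

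Next, the geometric instance. Fix a free non-abelian group $F$ of rotations of $\mathbb{R}^3$ (such subgroups of $SO(3)$ exist, by Hausdorff's construction) acting on the sphere $S^2$; removing the countably many points with nontrivial stabiliser leaves a set $X$ on which $F$ acts freely, so $X$ is partitioned into orbits, each a copy of $F$ as an $F$-set and each carrying a canonical copy of the Cayley graph of $F$, which is a tree. I would assemble $B$ by placing copies of the gadgets $\Gamma_k$ along these Cayley graphs, across all orbits. Since a finite piece of a tree is a forest, a finite substructure of $B$ lies within finitely many gadgets glued along a forest, which by~(a) is arc-consistent and tree-like, hence maps to $A$. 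A global homomorphism $h\colon B\to A$, by contrast, assembles via~(b) a coherent ``orbitwise selection''. In the model case $A=K_2$ the gadgets are simply the edges of the Cayley graph, so $h$ is a $2$-colouring of $B$, equivalently a choice, for each orbit $O$, of one of the two classes $O^{\bullet}$ of the bipartite Cayley tree of $F$ on $O$ -- which is exactly the axiom of choice for pairs, and is what ``$K_2$ is compact'' amounts to.

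Finally, ``$A$ is compact'' applied to $B$ produces such an $h$, and from the orbitwise selection one forms a set $V\subseteq X\subseteq S^2$; for $A=K_2$, $V=\bigcup_O O^{\bullet}$. Because every generator of $F$ has odd word length it swaps the two classes of each orbit, so $V$ and $X\setminus V$ are translates of each other under a single rotation and $V$ is invariant under the index-two subgroup $F'\le F$ of even-length words. But a free non-abelian group of rotations is dense in $SO(3)$, hence acts ergodically on $S^2$ with respect to the rotation-invariant measure, so an invariant measurable set is null or co-null; a measurable $V$ would therefore have measure exactly half that of $S^2$, a contradiction. Thus $V$ is non-measurable on $S^2$, and a radial extension across a spherical shell together with a standard Fubini argument transfers this to a non-Lebesgue-measurable subset of $\mathbb{R}^3$. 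For general non-width-$1$ $A$ the gadget-forced data is arranged to play the same role, producing a measurable-only witness invariant under a subgroup of $F$ that still acts ergodically on $S^2$. The main obstacle is exactly the tension inside the construction of $B$: clause~(a) pulls toward local solvability and clause~(b) toward global rigidity, and the two can be reconciled -- with (b) made strong enough for the ergodicity argument -- only by exploiting the precise gap between arc-consistency and global solvability that opens up when width~$1$ fails. (The subcase where $A$ has no cyclic polymorphism can alternatively be settled by Theorem~\ref{ultrafilterthm}, which already yields the ultrafilter axiom and hence a non-measurable set; the genuinely new range is where $A$ has a cyclic polymorphism but not width~$1$. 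Note also that it is the non-amenability of $SO(3)$, not any abundance of choice, that does the work, which is why the statement lives in $\mathbb{R}^3$ and not in $\mathbb{R}$.)
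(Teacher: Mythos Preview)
Your high-level architecture matches the paper's: a free subgroup of $SO(3)$ acting on the sphere, a tree-shaped instance whose finite pieces map to $A$, compactness to globalise, and then a measure-theoretic contradiction. But the execution diverges in two essential respects, and in both places the paper's choice is what makes the argument go through in ZF.

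First, the paper runs the implication contrapositively: it assumes ``$A$ is compact'' \emph{and} ``every subset of $\mathbb{R}^3$ is measurable'' and produces a homomorphism $\mathcal{U}(A)\to A$. The instance is built \emph{uniformly} from $\mathcal{U}(A)$, not from a witness to the failure of width~$1$: one takes a free group on generators $Q_1,Q_2,D_1,\dots,D_b$ in $SO(3)$, lets $\mathcal{N}$ be the normal closure of $\langle Q_1,Q_2\rangle$, sets $T$ to be the $\langle D_1,\dots,D_b\rangle$-tree on the $\mathcal{N}$-orbits, and then defines $C$ on $V(T)\times V(\mathcal{U}(A))$ by stretching each tuple of $R_i(\mathcal{U}(A))$ along a dedicated path of $D$-edges. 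Each component of $C$ is a tree that maps to $A$ simply by the defining property of $R_i(\mathcal{U}(A))$; no gadget extracted from a width-$1$ obstruction is needed. Compactness then yields $\phi\colon C\to A$, and each sphere point $p$ gives a function $f_p\colon V(\mathcal{U}(A))\to V(A)$. Your plan instead hinges on gadgets $\Gamma_k$ that ``force a tie-breaking decision''; this is exactly the step that is hard to specify for general $A$, and your description of the obstruction (``$S_1,\dots,S_n$ arc-consistent with respect to every relation of $A$ yet with no joint selection'') does not match the actual content of $\mathcal{U}(A)\not\to A$. The paper's contrapositive route removes this difficulty entirely: rather than extracting one non-measurable set from $\phi$, it uses measurability of finitely many ``Fish'' sets to locate a point $p$ at which $f_p=f_{D_ip}$ for all $i$, and then $f_p$ is the desired $\mathcal{U}(A)\to A$.

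Second, your endgame invokes ergodicity of a dense rotation subgroup. In ZF one only has \emph{finite} additivity of the outer measure (the paper is explicit that countable additivity already needs countable choice), and the usual ergodicity/Lebesgue-density arguments are not available. The paper replaces ergodicity with a self-contained Steinhaus-type lemma: the normal subgroup $\mathcal{N}$ is shown to approximate any rotation $D$ arbitrarily well (Lemma~\ref{tech}), and since each Fish set is $\mathcal{N}$-invariant but disjoint from its $D$-image, a direct box-covering estimate forces its measure to be~$0$. That is the device doing the work your ergodicity appeal was meant to do, and it is ZF-clean.
\qed
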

\noindent
The non-measurable sets were originally viewed as an undesirable
consequence of the axiom of choice, but their existence already
follows from the compactness of any relational structure that
does not have width $1$. There are models of ZF without the 
axiom of choice, where all sets of $\mathbb{R}^n$ are measurable.
In Section~\ref{measure}, we will provide a
background in measure theory and its use here.
We then give the proof of Theorem~\ref{main} in 
Section~\ref{banach} and conclude with an open problem.

\section{Terminology} \label{overview}
A {\em type} is a finite set $\sigma = \{R_1,\dots,R_m\}$ of 
relation symbols, each with  an {\em arity}  $r(i)$ assigned to it. 
A {\em relational structure of type $\sigma$} is a $(m+1)$-tuple
$A = (V(A);R_1(A),\dots,R_m(A))$ where $V(A)$ 
is a nonempty set called the {\em universe} of $A$, and 
$R_i(A)$ is an $r(i)$-ary relation on $V(A)$ for each $i$.
Let $A$ and $B$ be $\sigma$-structures,
a {\em homomorphism} from $B$ to $A$ is a map 
$\phi: V(B) \rightarrow V(A)$ such that $\phi(R_i(B)) 
\subseteq R_i(A)$ for all $i=1,\dots,m$.
If $V(B)$ is a subset of $V(A)$ and the identity is
a homomorphism from $B$ to $A$,
then $B$ is called a {\em substructure} of $A$.
The constraint satisfaction problem $\mbox{CSP}(A)$
is the problem of determining whether an input structure $B$ admits 
a homomorphism to $A$.

The {\em $n$-th power} $A^n$ of $A$ has $V(A)^n$ as universe, and
for $R_i \in \sigma$, $R_i(A^n)$ consists of all the $r(i)$-tuples $(f_1, \ldots, f_{r(i)})$
such that for all $j$, $(f_1(j), \ldots, f_{r(i)}(j)) \in R_i(A)$. (Note that an element
of $V(A)^n$ can be represented either as a function $f: \{1, \ldots, n\} \rightarrow V(A)$
 or as an array $(a_1, \ldots, a_n)$.)
A {\em polymorphism of arity $n$} is a homomorphism $\phi: A^n \rightarrow A$.
It is {\em cyclic} if it satisfies $\phi(a_1, \ldots, a_n) = \phi(a_2, \ldots, a_n, a_1)$.
The existence of a cyclic polymorphism on $A$ is the criterion
used in Theorem~\ref{ultrafilterthm}. Here we focus on structures that have {\em totally symmetric polymorphisms}
of all arities, that is polymorphisms $\phi: A^n \rightarrow A$ that satisfy
$\phi_n(f) = \phi_n(g)$ whenever $f(\{1, \ldots, n\}) = g(\{1, \ldots, n\})$. 
The existence of totally symmetric polymorphisms of all arities on $A$ is equivalent
to the existence of a single homomorphism $\omega: \mathcal{U}(A) \rightarrow A$, where 
$\mathcal{U}(A)$ is the structure defined as follows. The
universe of $\mathcal{U}(A)$ is the set of nonempty subsets of $V(A)$, and for 
$R_i \in \sigma$, $R_i(\mathcal{U}(A))$ consists of all
the $r(i)$-tuples $(S_1, \ldots, S_{r(i)})$ such that for all $j$ and $a_j \in S_j$, there exists
$(a_1, \ldots, a_{r(i)}) \in R_i(A)$ such that $a_k \in S_k$ for all $k$.
A structure $A$ is said to have {\em width $1$} or {\em tree duality} if there
exists a homomorphism from $\mathcal{U}(A)$ to $A$.

The {\em consistency check} algorithm proceeds as follows to seek a homomorphism from $B$ to $A$.
Define a map $\ell: V(B) \rightarrow \mathcal{P}(V(A))$ listing all possible images of
each element of $V(B)$ under a homomorphism. Each $\ell(b)$ is initialised at $V(A)$, but then these lists 
are recursively trimmed for consistency: If for $R_i \in \sigma$ and $(b_1, \ldots, b_{r(i)}) \in R_i(B)$
there is some $j$ and $a_j \in \ell(b_j)$ for which there is no $(a_1, \ldots, a_{r(i)}) \in R_i(A)$
with $a_k \in \ell(b_k)$ for all $k$, then $a_j$ is removed from $\ell(b_j)$. If at some point in the process
some list $\ell(b)$ becomes empty, it proves that there is no homomorphism from $B$ to $A$. Otherwise,
$\ell$ stabilises at a homomorphism from $B$ to $\mathcal{U}(A)$. If $A$ has width $1$,
$\ell$ can then be composed with $\omega: \mathcal{U}(A) \rightarrow A$ to give a homomorphism
from $B$ to $A$.

The same trick essentialy shows also that $A$ is compact.
Indeed suppose that all finite substructures of $B$ admit homomorphisms to $A$. For $b \in V(B)$,
let $\ell(b)$ be the set of all elements $a$ of $V(A)$ such that for every finite substructure $B'$
of $B$ containing $b$, there exists a homomorphism $\psi: B' \rightarrow A$ such that $\psi(b) = a$.
Since $V(A)$ is finite, $\ell(b)$ must be nonempty, and the map $\ell: B \rightarrow \mathcal{U}(A)$
is easily seen to be a homomorphism. Again composing with $\omega$ gives a homomorphism $\phi: B \rightarrow A$.
This is essentially the proof of compactness of $A$ given in~\cite{rtw}.

To prove Theorem~\ref{main}, we must deduce the existence of the homomorphism
$\omega: \mathcal{U}(A) \rightarrow A$ from the fact that $A$ is compact and that
all sets in $\mathbb{R}^3$ are measurable. We will use adaptations of the
Banach-Tarski paradox and the Steinhaus theorem.

\section{Measurable sets} \label{measure}
We will use the Lebesgue outer measure in $\mathbb{R}^3$. 
The volume of a box $\Pi_{i=1}^3 (a_i,b_i)$ is $\Pi_{i=1}^3 (b_i - a_i)$, 
and the measure $\mu(X)$ of a set $X \subseteq \mathbb{R}^3$ is the infimum of sums of volumes of a 
countable collection of boxes that cover $X$. Its basic properties can be proved in ZF.
\begin{itemize}
\item Monotonicity: if $X \subseteq Y$, then $\mu(X) \leq \mu(Y)$.
\item Extension of box volume: $\mu(\Pi_{i=1}^3 (a_i,b_i)) = \Pi_{i=1}^3 (b_i - a_i)$. 
\item Invariance under translation: $\mu(X + y) = \mu(X)$ for any $y \in \mathbb{R}^3$ 
\item Invariance under rotation: $\mu(MX) = \mu(X)$ for any orthogonal matrix $M$. 
\end{itemize}
A set $X$ is called {\em measurable} if for any set $Y$ we have $\mu(Y) = \mu(Y \cap X) + \mu(Y\setminus X)$.
Restricted to measurable sets, $\mu$ is additive, but here we postulate that all sets are measurable.
\begin{itemize}
\item Postulate of additivity: If $X$ and $Y$ are disjoint, then $\mu(X \cup Y) = \mu(X) + \mu(Y)$.
\end{itemize}
It then follows that $\mu$ is finitely additive. However the countable axiom of choice 
is needed to prove its countable additivity. 

The statement ``all sets in $\mathbb{R}$ are measurable'' is well known to be an axiom in
Solovay's model~\cite{solovay}, along with the principle of dependent choice which is
necessary for the countable additivity of the Lebesgue measure and other results in analysis.
Within Solovay's model, it can be proved that all sets in 
$\mathbb{R}^n$ are also measurable.
However the construction of Solovay's model requires the existence of an inaccessible cardinal as an axiom. 
Here, we do not require the countable additivity of the 
Lebesgue measure and the principle of dependent choice.
According to note 6 of the epilogue of~\cite{moore},
Solovay first found a model of ZF in which all sets in 
$\mathbb{R}$ are measurable,
without the use of an inaccessible cardinal.
This was not published, because Solovay considered
the principle of dependent choice essential.
It is likely that in this earlier model,
all sets in $\mathbb{R}^3$ are also measurable. 
However, based on published results, the
dichotomy established by Theorem~\ref{main} is dependent
on the existence of an inaccessible cardinal.

\section{Banach-Tarski graphs} \label{banach}
The Banach-Tarski paradox is a well-known refutation of the additivity of the $3$-dimensional Lebesgue measure 
in the presence of the axiom of choice. We use its setting constructively in ZF. 
We use orthogonal $3$ by $3$ matrices, more precisely rotation matrices whose action on the sphere $\mathbb{S}_2$ is a rotation
about an axis. There are well-known constructions of pairs
of such rotation matrices which generate a free group.
In turn, a free group on two generators contains a countably infinitely generated free group. 
We will use a group $\mathcal{G}$ of rotation matrices freely generated by a finite set $\{Q_1, Q_2\} \cup \{D_1, \ldots, D_b\}$,
where $b$ will be specified later. We denote $F_{\mathcal{G}}$ the set of fixed points of nonidentity elements of $\mathcal{G}$
on $\mathbb{S}_2$. Then $F_{\mathcal{G}}$ is countable.
For $p$ in $F_{\mathcal{G}}$, the elements of $\mathcal{G}$ that have $p$ as pole commute, and since $\mathcal{G}$ 
is free, this means that they are the powers of a single 
(primitive) element of $\mathcal{G}$. For $p \not \in F_{\mathcal{G}}$, the images of 
$p$ by elements of $\mathcal{G}$ are all distinct. 

Let $\mathcal{N}$ be the normal closure of $\langle Q_1, Q_2 \rangle$ in $\mathcal{G}$. 
Then $\mathcal{G}/\mathcal{N}$  is naturally isomorphic to $\langle D_1, \ldots, D_b \rangle$.
Let $(\mathbb{S}_2\setminus F_{\mathcal{G}})/\mathcal{N}$ be the set of orbits of $(\mathbb{S}_2 \setminus F_{\mathcal{G}})$ under $\mathcal{N}$. 
We will use a density property
of $\mathcal{N}$. Let $\delta(M)$ be the maximum of distances from $p$ to $Mp$ with $p$ in $\mathbb{S}_2$. When $M$ is a rotation matrix,
$\delta(M)$ is realised on the ``equator'' of $\mathbb{S}_2$ when the axis of $M$ is taken as north-south axis.
\begin{lem}\label{tech}
For any rotation matrix $D$ and any $\epsilon > 0$, there is some $M$ in $\mathcal{N}$ such that
$\delta(DM) \leq \epsilon$.
\end{lem}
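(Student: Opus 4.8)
The plan is to reduce Lemma~\ref{tech} to a density statement about the subgroup $\mathcal{N}$ acting on the sphere, and then to exploit the fact that $\mathcal{N}$ contains, for each generator $D_k$, a conjugate $D_k^{Q_1}$ (or similar) of arbitrarily ``tilted'' axis. First I would observe that $\delta$ measures how far a rotation moves points: $\delta(M) \le \epsilon$ exactly when the rotation angle of $M$ is small \emph{and} — trivially — always, since a rotation by angle $\theta$ about any axis moves equatorial points by a chord of length $2\sin(\theta/2)$, so $\delta(M)$ depends only on the rotation angle $\theta(M)$, not the axis. Thus the claim ``$\delta(DM) \le \epsilon$ for some $M \in \mathcal{N}$'' is equivalent to ``the rotation angle of $DM$ can be made arbitrarily small by choosing $M \in \mathcal{N}$''. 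This is a statement purely about the set of rotation angles realised by the coset $D\mathcal{N}$.

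Next I would show that the angles of elements of $\mathcal{N}$ are dense near $0$, and more: that $D\mathcal{N}$ contains elements of arbitrarily small angle. The key input is that $\mathcal{G}$ is free on $\{Q_1,Q_2\}\cup\{D_1,\dots,D_b\}$, so $\mathcal{N} = \langle\langle Q_1,Q_2\rangle\rangle$ is an infinite-rank free group; in particular it contains the conjugates $g Q_i g^{-1}$ for all $g\in\mathcal{G}$. The rotation $Q_1$ has some fixed axis and some angle $\alpha$; its conjugate $D Q_1 D^{-1}$ is a rotation by the same angle $\alpha$ about the rotated axis $D(\text{axis of }Q_1)$. The point is that by composing such conjugates one builds elements of $\mathcal{N}$ realising a dense set of (axis, angle) pairs, and by a standard commutator-of-close-rotations argument — if $R,S$ are rotations by small angles, $[R,S]$ is a rotation by a much smaller angle — one drives the angle to $0$. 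Concretely: pick $M_0 \in \mathcal{N}$ so that $DM_0$ has angle within $\pi/2$, say; then produce $M_1 \in \mathcal{N}$ whose axis is nearly perpendicular to that of $DM_0$ and whose angle is comparable, so that $(DM_0)$ and $M_1$ generate, via an iterated commutator living in the coset's normal part, a rotation of angle $\le \epsilon$. Since conjugation by $\mathcal{G}$ keeps us inside $\mathcal{N}$, all these auxiliary elements can be chosen in $\mathcal{N}$, and the final product stays in $D\mathcal{N}$.

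I expect the main obstacle to be the bookkeeping that makes the commutator-shrinking argument stay inside the coset $D\mathcal{N}$ rather than merely inside $\mathcal{N}$: the naive commutator trick produces small-angle elements of $\mathcal{N}$ itself, but here we need $DM$ small, i.e.\ an element of the nonidentity coset. The fix is to absorb $D$ into the first factor and then adjust by conjugates: writing the target as $DM_0$ and then replacing it by $(DM_0)\cdot[\,(DM_0)^{-1} N (DM_0), N^{-1}\,]$-type expressions, where $N$ ranges over $\mathcal{N}$, so that the correction term is a product of $\mathcal{N}$-conjugates of elements of $\mathcal{N}$ and hence lies in $\mathcal{N}$, while its \emph{angle} is controlled by how close $DM_0$ and $N$ are as rotations. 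Since $\mathcal{N}$ is dense enough in $SO(3)$ — because even a single pair of generic conjugates of $Q_1$ generates a dense subgroup, a classical fact provable in ZF — one can choose $N$ as close as desired to any prescribed rotation, in particular close enough to $DM_0$ for the commutator estimate to yield angle $\le\epsilon$. I would also need to verify that the density of $\mathcal{N}$ and all the approximations are effective in ZF, using only explicit rotation matrices with, say, algebraic entries, so no choice is invoked; this is routine since everything reduces to solving finitely many polynomial inequalities over $\mathbb{Q}$.
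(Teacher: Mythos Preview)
Your reformulation is right: $\delta(R)$ depends only on the rotation angle, so the task is to put an element of the coset $D\mathcal{N}$ arbitrarily close to the identity. You also identify the decisive fact, that $\mathcal{N}$ is dense in $SO(3)$ --- indeed $\mathcal{N}\supseteq\langle Q_1,Q_2\rangle$, and a free rank-$2$ subgroup of $SO(3)$ cannot sit inside any proper closed subgroup (these are finite or conjugate into $O(2)$, both virtually abelian). But once you have density, the proof is one line: choose $M\in\mathcal{N}$ with $M$ close to $D^{-1}$ in the operator norm, and then $DM$ is close to $I$, hence $\delta(DM)\le\epsilon$. There is no coset obstacle at all.

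Your commutator machinery is therefore unnecessary, and as written it does not work: in the expression $(DM_0)\cdot[(DM_0)^{-1}N(DM_0),N^{-1}]$ you need $(DM_0)^{-1}N(DM_0)\in\mathcal{N}$, but the lemma is stated for an \emph{arbitrary} rotation $D$, not for $D\in\mathcal{G}$, so normality of $\mathcal{N}$ in $\mathcal{G}$ gives you nothing here. Drop that whole paragraph; the ``main obstacle'' you anticipate is not there.

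For comparison, the paper does \emph{not} prove density of $\mathcal{N}$ in $SO(3)$. It proves the weaker statement that the set $F_{\mathcal{N}}$ of axes of nonidentity elements of $\mathcal{N}$ is dense in $\mathbb{S}_2$ (by a neat geometric argument: a gap in $F_{\mathcal{N}}$ propagates to annular gaps around every point of $F_{\mathcal{N}}$, forcing $F_{\mathcal{N}}$ finite, impossible since $\mathcal{N}$ contains a nonabelian free group). Then, given $D$ with pole $p$, it picks $M\in\mathcal{N}$ with a pole $q$ near $p$; since $M$ has infinite order its angle is an irrational multiple of $2\pi$, so some power $M^k$ is a rotation about $q$ by approximately $-\theta(D)$, whence $DM^k$ is close to the identity. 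Your density route is arguably slicker once density is granted, but the paper's route is more self-contained in that it avoids the classification of closed subgroups of $SO(3)$.
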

\begin{proof}
Here for simplicity we work with spherical rather than Euclidian distances; the result is the same.
We first show that the set $F_{\mathcal{N}}$ of fixed points of nonidentity elements of $\mathcal{N}$ is dense in $\mathbb{S}_2$.
Let $p$ be a point of $\mathbb{S}_2$, and suppose that the infimum of spherical distances from $p$ to elements of $F_{\mathcal{N}}$ is
$\iota > 0$. Select $q \in F_{\mathcal{N}}$ at spherical distance 
$\iota + \upsilon < 2\iota$ from $p$. Then the elements of 
$\mathcal{N}$ with $q$ as a pole move the open ball of radius 
$\iota$ around  $p$ surjectively to the annulus 
$a(q,\upsilon,\upsilon+2\iota)$ containing the points at spherical distance strictly between 
$\upsilon$ and $\upsilon + 2\iota$ around $q$. This annulus cannot contain any point of $F_{\mathcal{N}}$
because the orbit of such a point under $\mathcal{N}$ would intersect the open ball of radius $\iota$ around $p$.
If $\upsilon > 0$ and $q'$ is an element of  $F_{\mathcal{N}}$
at distance between $\frac{2}{3} \upsilon$ and $\upsilon$
from $q$, then the elements of $\mathcal{N}$ with $q'$
as a pole move $q$ densely on a circle which intersects
$a(q,\upsilon,\upsilon+2\iota)$. This is a contradiction,
so $\upsilon = 0$ and $a(q,\upsilon,\upsilon+2\iota)$
is the punctured disk $a(q,0,2\iota)$.
Therefore $F_{\mathcal{N}}$ cannot contain an element
$q' \neq \pm q$, because the elements of $\mathcal{N}$
with such a $q'$ as a pole would move $q$ densely on a circle with positive radius, a contradiction.
Therefore $F_{\mathcal{N}} = \{q, -q\}$, but this is impossible
since $\mathcal{N}$ contains the free group 
$\langle Q_1, Q_2 \rangle$.
This shows that $F_{\mathcal{N}}$ is dense in $\mathbb{S}_2$.

Now, let $p$ be a pole of $D$, and for $\epsilon> 0$, let $N$ be an element of $\mathcal{N}$ with a pole $q$ 
at distance less than $\epsilon/3$ from $p$. Let $R$ be the rotation matrix with the great circle through
$p$ and $q$ as equator, that moves $p$ to $q$. Then $R^{-1}NR$ has $p$ as a pole, and angle equal to that of $N$.
Some power $R^{-1}N^kR$ of $R^{-1}NR$ approximates $D^{-1}$ in the sense that the distance between $D^{-1}x$ and
$R^{-1}N^kRx$ is always at most $\epsilon/3$. The distance from $D^{-1} x$ to $N^k x$ is then at most that of
$D^{-1} x$ to $R D^{-1} x$ plus that of $R D^{-1} x$ to $R R^{-1}N^kR x$ plus that of $N^kR x$ to $N^k x$,
with each term at most $\epsilon/3$. Therefore the distance from $x$ to $D N^k x$ is at most $\epsilon$, so that $M = N^k$ satisfies
$\delta(DM) \leq \epsilon.$
\end{proof}

Now we specify $b = \sum_{R_i \in \sigma} (r(i)-1) \cdot \left \lvert R_i(\mathcal{U}(A)) \right \rvert$.
For each  $R_i \in \sigma$ with $r(i) > 1$, for each $(S_1, \ldots, S_{r(i)}) \in R_i(\mathcal{U}(A))$
and for each $j \in \{1, \ldots, r(i)-1\}$ there is a specific generator 
$D\left (R_i,(S_1, \ldots, S_{r(i)}),j\right ) \in \{D_1, \ldots, D_b\}$ representing a connection between
$S_j$ and $S_{j+1}$ in $(S_1, \ldots, S_{r(i)}) \in R_i(\mathcal{U}(A))$.
Let $T$ be the digraph with $(\mathbb{S}_2 \setminus F_{\mathcal{G}})/\mathcal{N}$ as vertex-set, where each vertex
$t$ has $b$ outneighbours $D_1 t, \ldots, D_b t$, and $b$ inneighbours $D_1^{-1} t, \ldots, D_b^{-1} t$.
Then $T$ is an infinite collection of directed regular trees.
We call it a Banach-Tarski graph by virtue of the special structure of its vertex-set.

Next we define a structure $C$ on the universe $V(T) \times V(\mathcal{U}(A))$.
For $R_i \in \sigma$ such that $r(i) = 1$, $R_i(C)$ contains all the elements 
$(t,S)$ such that $t \in V(T)$ and $S \in R_i(\mathcal{U}(A))$. By definition,
$S \in R_i(\mathcal{U}(A))$ means that $s \in R_i(A)$ for all $s \in S$.
For $R_i \in \sigma$ such that $r(i) > 1$,
$R_i(C)$ contains all the elements 
$$\left ((t,S_1),(D_{j_1} t,S_2), (D_{j_2} D_{j_1} t, S_3), \ldots, (D_{j_{r(i)-1}} \cdots D_{j_1} t, S_{r(i)})\right )$$
such that $t \in V(T)$, $(S_1, \ldots, S_{r(i)}) \in R_i(\mathcal{U}(A))$ and
$D_{j_k} = D(R_i,(S_1, \ldots, S_{r(i)}),k)$ for each $k \in \{1, \ldots, r(i)-1\}$.\\

\noindent Locally, the substructure of $C$ induced by $\{t\}] \times V(\mathcal{U}(A))$ only retains
the relations of arity $1$. For any $D \in \{D_1, \ldots, D_b\}$, there is only one connection 
between $\{t\} \times V(\mathcal{U}(A))$ and $\{D t\} \times V(\mathcal{U}(A))$,
specifically between $(t,S_j)$ and $(Dt,S_{j+1})$ where $D = D(R_i,(S_1, \ldots, S_{r(i)}),j)$.
Thus any connected component $C'$ of $C$ is a tree that admits a homomorphism to $A$,
even without the countable axiom of choice. 
Indeed for all $R_i \in \sigma$,
$(S_1, \ldots, S_{r(i)}) \in R_i(\mathcal{U}(A))$, $j \in \{1, \ldots, r(i)\}$
and $s_j \in S_j$, one can select in advance a unique 
$(s_1, \ldots, s_{r(i)}) \in R_i(A)$
such that $s_k \in S_k$ for all $k$. 
Call it $\psi(R_i,(S_1, \ldots, S_{r(i)}), j, s_j)$.
With this, a single initial choice of $(t,S) \in V(C')$
and $\phi(t,S) \in S$ extends canonically to
a homomorphism $\phi: C' \rightarrow A$. 
Of course, infinitely many choices would be needed to get a homomorphism from the whole of $C$ to $A$. However, since the finite substructures of $C$ are contained in finitely many 
tree components, we get the following. 
\begin{lem} \label{compacthom}
If $A$ is compact, then there exists a homomorphism $\phi: C \rightarrow A$.
\hfill $\qed$
\end{lem}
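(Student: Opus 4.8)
The plan is to verify that every finite substructure of $C$ admits a homomorphism to $A$, and then to invoke the hypothesis that $A$ is compact.

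First I would take an arbitrary finite substructure $B$ of $C$ and observe that, since $V(B)$ is finite, $B$ meets only finitely many connected components of $C$, say the tree components $C'_1, \dots, C'_k$. As every tuple of every relation of $C$ has all of its coordinates in a single component, $B$ is the disjoint union of the substructures $B_i := B \cap C'_i$, each $B_i$ being a substructure of $C'_i$. I would then invoke the fact, established in the discussion preceding the lemma, that each tree component $C'_i$ admits a homomorphism $\phi_i\colon C'_i \to A$: using the fixed finite selection of witnessing tuples $(s_1, \dots, s_{r(j)}) \in R_j(A)$ --- available in ZF because $\mathcal{U}(A)$, each set $S \in V(\mathcal{U}(A))$, and each relation $R_j(A)$ are finite --- one only has to pick one vertex of $C'_i$ together with a value for it, and the selection propagates $\phi_i$ uniquely over the tree. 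Choosing such a base vertex for each of the finitely many indices $i = 1, \dots, k$ amounts to only finitely many choices, so this step stays within ZF.

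Finally, I would set $\phi_B := \bigcup_{i=1}^k \phi_i|_{B_i}$; this is a well-defined map on $V(B)$ because the $B_i$ have pairwise disjoint universes, and it is a homomorphism from $B$ to $A$ because every relational tuple of $B$ lies entirely inside one $B_i$. Since $B$ was an arbitrary finite substructure of $C$, the compactness of $A$ then yields a homomorphism $\phi\colon C \to A$. I expect no serious obstacle in this argument; the one point that needs attention is the observation that a finite substructure of $C$ meets only finitely many tree components, since it is exactly this finiteness that confines the construction of component-wise homomorphisms to a finite number of choices and thereby isolates the single essential appeal to the compactness of $A$.
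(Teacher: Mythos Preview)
Your proposal is correct and follows exactly the approach the paper sketches in the paragraph immediately preceding the lemma: a finite substructure of $C$ touches only finitely many tree components, each component admits a homomorphism to $A$ via the fixed finite selection and a single choice of root value, and combining these finitely many homomorphisms gives a homomorphism on the finite substructure, whence compactness of $A$ yields the global $\phi$. You have made explicit the ZF-safety of the finitely many root choices, which the paper leaves implicit.
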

\noindent
For $p \in \mathbb{S}_2\setminus F_\mathcal{G}$, let $f_p: V(\mathcal{U}(A)) \rightarrow V(A)$
be the function defined by $f_p(S) = \phi(p/\mathcal{N},S)$. It preserves the relations of arity $1$,
but otherwise it could be arbitrary. Let $\mathcal{D}$ be the set of matrix products of the form 
$D_{i_1} \cdots D_{i_k}$ such that $0 \leq k \leq b$ and the terms $D_{i_j}$ are 
distinct elements of $\{D_1, \ldots, D_b\}$. For each $f: V(\mathcal{U}(A)) \rightarrow V(A)$
and $D \in \mathcal{D}$, define
$$\fish_{f,D} = \{\lambda p | \lambda \in (0,1], p \in \mathbb{S}_2\setminus F_\mathcal{G} \mbox{ and } 
f_{p} = f \neq f_{D p}\} \subseteq \mathbb{R}^3.$$
There is a finite number of these sets.
Just like in anglers' tales and biblical stories, there is an ambiguity in gauging the measure of a  $\fish_{f,D}$
if it is nontrivial. The following is an adaptation of the theorem of Steinhaus.
\begin{lem}\label{mezero}
If $\fish_{f,D}$ is measurable, then $\mu(\fish_{f,D}) = 0$.
\end{lem}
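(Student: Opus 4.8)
The plan is to mimic the classical proof of the Steinhaus theorem, which says that the difference set $X - X$ of a set of positive measure contains a neighbourhood of the origin, and to derive a contradiction from the fact that $\fish_{f,D}$ is ``shifted off itself'' by a rotation coming from the dense normal subgroup $\mathcal{N}$. Concretely, suppose $\fish_{f,D}$ is measurable with $\mu(\fish_{f,D}) = c > 0$. First I would observe that $\fish_{f,D}$ is a union of rays, so it is determined by its radial projection onto $\mathbb{S}_2$, and passing to a bounded annular piece $\{\lambda p : \lambda \in [1/2,1], p \in \ldots\}$ we still have positive measure; call this bounded piece $F$. The key feature is that for any $M \in \mathcal{N}$, the rotated set $MF$ is \emph{essentially disjoint} from $F$: if $\lambda p \in F$ and $M(\lambda p) = \lambda(Mp) \in F$, then on one hand $f_{p/\mathcal{N}} = f$, and on the other hand $f_{(Mp)/\mathcal{N}} = f$ as well; but $Mp$ and $p$ lie in the same $\mathcal{N}$-orbit, so $(Mp)/\mathcal{N} = p/\mathcal{N}$ and this is consistent — so I need to be more careful. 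The real disjointness is between $F$ and $DF$ composed with the $\mathcal{N}$-shift: the defining condition $f_{p/\mathcal{N}} = f \neq f_{(Dp)/\mathcal{N}}$ is a condition on the $\mathcal{N}$-orbit of $p$ and the $\mathcal{N}$-orbit of $Dp$, hence it is invariant under replacing $p$ by $Mp$ for $M \in \mathcal{N}$ provided we also replace $D$ by $MDM'$ for suitable $M' \in \mathcal{N}$; this is where Lemma~\ref{tech} enters.

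The heart of the argument: by Lemma~\ref{tech}, for every $\epsilon > 0$ there is $M \in \mathcal{N}$ with $\delta(DM) \leq \epsilon$, i.e.\ the rotation $DM$ moves every point of $\mathbb{S}_2$ by at most $\epsilon$. Now I claim $(DM) F$ is essentially disjoint from $F$ in the appropriate sense that leads to a contradiction. The point is that the $\mathcal{N}$-orbit of $p$ equals that of $Mp$ (since $M \in \mathcal{N}$), so $f_{(Mp)/\mathcal{N}} = f_{p/\mathcal{N}}$; likewise $D(Mp) = (DM)p$ has the same $\mathcal{N}$-orbit as what governs membership. So if $\lambda p \in F$ then $f_{p/\mathcal{N}} = f \neq f_{(Dp)/\mathcal{N}}$, and relabelling $p' = Mp$ we get $f_{p'/\mathcal{N}} = f$ and $f_{(DMp')/\mathcal{N}}$ — hmm, $DMp' = DMMp$, not $DMp$. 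Let me instead use that $\fish_{f,D}$, being $\mathcal{N}$-orbit-defined, satisfies: $\lambda p \in \fish_{f,D}$ iff $\lambda(Mp) \in \fish_{f,D'}$ where $D' = MDM^{-1}$; and $\langle D_1, \ldots, D_b\rangle \cong \mathcal{G}/\mathcal{N}$ so $D' \equiv D \pmod{\mathcal{N}}$ acts the same way on orbits. The upshot I will establish is that for a rotation $Q = DM$ with $\delta(Q)$ tiny, the sets $F$ and $QF$ must be disjoint up to measure zero — because a point in both would have to satisfy both $f_{\cdot} = f$ and $f_{\cdot} \neq f$ on the same orbit. Then translating the classical Steinhaus proof: pick an open box $\Pi$ with $\mu(F \cap \Pi) > (1-\eta)\mu(\Pi)$ by outer regularity of Lebesgue measure (provable in ZF), choose $Q \in \mathcal{N}$-coset with $\delta(Q)$ so small that $Q\Pi$ and $\Pi$ overlap in more than half their volume, and use rotation-invariance of $\mu$ together with finite additivity (the postulate of additivity) to force $\mu(F \cap \Pi) + \mu(QF \cap Q\Pi) > \mu(\Pi \cap Q\Pi)$, so $F$ and $QF$ cannot be disjoint in $\Pi \cap Q\Pi$, contradiction.

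More carefully, here is the clean version I would write. Since $\fish_{f,D}$ is a cone (union of rays from the origin), replace it by $F = \fish_{f,D} \cap \{x : 1/2 \le |x| \le 1\}$; then $\mu(F) > 0$ iff $\mu(\fish_{f,D}) > 0$, and $F$ is bounded. By outer regularity there is an open set $U \supseteq F$ with $\mu(U) < (1+\eta)\mu(F)$, and by a Lebesgue-density style argument (or directly, covering $U$ by boxes) there is a box $\Pi$ with $\mu(F \cap \Pi) > \tfrac{3}{4}\mu(\Pi)$. Pick a rotation $Q$ with $\delta(Q) \le \epsilon$ where $\epsilon$ is small enough that $\mu(\Pi \cap Q\Pi) > \tfrac{7}{8}\mu(\Pi)$ (here I use that a small rotation of a box moves it little, plus the annulus $1/2 \le |x| \le 1$ stays away from the origin where rotations by small angle can still move points a lot — this is exactly why $\delta$ controls everything on a bounded set away from $0$), and such that $Q$ lies in the coset $D\mathcal{N}$, which is possible by Lemma~\ref{tech} applied to $D$. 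Then $Q F$ is disjoint from $F$: a common point $x = \lambda p = Q(\lambda p') = \lambda(Qp')$ would give $p = Qp'$, and since $Q \in D\mathcal{N}$, writing $Q = DM$ with $M \in \mathcal{N}$ we get $p/\mathcal{N} = (Dp')/\mathcal{N}$, so $f_{(Dp')/\mathcal{N}} = f_{p/\mathcal{N}} = f$; but $x \in F$ forces $f_{p/\mathcal{N}} = f$ and $x \in QF$ (i.e.\ $\lambda p' \in F$) forces $f_{(Dp')/\mathcal{N}} \ne f$, a contradiction. Hence $F \cap \Pi$ and $QF \cap Q\Pi$ are disjoint subsets of $\Pi \cap Q\Pi$, so by rotation-invariance and finite additivity $2 \cdot \tfrac{3}{4}\mu(\Pi) < \mu(F\cap\Pi) + \mu(Q(F \cap \Pi)) \le \mu(\Pi \cap Q\Pi) + (\mu(\Pi) - \mu(\Pi\cap Q\Pi))$? — no: both $F \cap \Pi$ and $QF \cap Q\Pi = Q(F \cap \Pi)$ sit inside $\Pi \cup Q\Pi$ but I want them both inside $\Pi \cap Q\Pi$. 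Adjusting: $\mu(F \cap \Pi \cap Q\Pi) > \tfrac{3}{4}\mu(\Pi) - \tfrac18\mu(\Pi) = \tfrac58\mu(\Pi)$ and similarly $\mu(QF \cap \Pi \cap Q\Pi) > \tfrac58\mu(\Pi)$, summing to more than $\mu(\Pi) \ge \mu(\Pi \cap Q\Pi)$, contradicting disjointness and finite additivity. Therefore $\mu(\fish_{f,D}) = 0$.

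The main obstacle I anticipate is the bookkeeping around the $\mathcal{N}$-orbit invariance: making precise that membership in $\fish_{f,D}$ depends only on $(p/\mathcal{N}, (Dp)/\mathcal{N})$ and that applying a rotation from the coset $D\mathcal{N}$ is what produces genuine (not merely almost-everywhere) disjointness. The measure-theoretic part is the standard Steinhaus argument and goes through in ZF using only outer regularity, monotonicity, rotation-invariance, and the postulated finite additivity; the only subtlety there is to work inside a bounded annulus away from the origin so that the displacement $\delta(Q)$ genuinely bounds how far $Q$ moves the relevant box, which is handled by the remark in the text that $\delta(M)$ is realised on the equator of $\mathbb{S}_2$.
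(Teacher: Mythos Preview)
Your proof is correct and follows the same Steinhaus-type strategy as the paper: both arguments rest on the two facts $M\,\fish_{f,D}=\fish_{f,D}$ for $M\in\mathcal{N}$ and $DM\,\fish_{f,D}\cap\fish_{f,D}=\emptyset$ (the latter using normality of $\mathcal{N}$, which you spell out and the paper leaves implicit), together with Lemma~\ref{tech} to make $DM$ an arbitrarily small rotation. The only difference is in the measure-theoretic packaging: the paper covers the whole set by boxes, enlarges each box by a margin $\epsilon/8$, and shows the union with its $DM$-image has volume strictly below $2\mu(\fish_{f,D})$; you instead locate a single box $\Pi$ of density above $3/4$ and run the pigeonhole inside $\Pi\cap Q\Pi$. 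These are the two standard variants of the Steinhaus argument and each works in ZF under the stated hypotheses.

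One harmless slip: your reason for restricting to the annulus $\tfrac12\le |x|\le 1$ is inverted. Rotations move points \emph{less} near the origin, not more; since $\fish_{f,D}$ already sits in the closed unit ball, the bound $\lVert x-Qx\rVert\le \delta(Q)$ holds throughout without any truncation. The annular restriction is unnecessary but does no harm (and your claim $\mu(F)>0\Leftrightarrow\mu(\fish_{f,D})>0$ is justifiable in ZF by scaling a single cover of $F$, so no hidden countable choice creeps in).
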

\begin{proof}
Suppose that $\mbox{Fish}_{f,D}$ is measurable and $\mu(\fish_{f,D}) > 0$. Then there exists a collection 
$\{ B_i \}_{i \in \mathbb{N}}$ of boxes such that $\fish_{f,D} \subseteq \bigcup_{i=0}^{\infty} B_i$ and 
$\sum_{i=0}^{\infty} \mu(B_i) < \frac{11}{10} \mu(\fish_{f,D})$. We can assume
that each box is contained in the ball of radius 2 around the origin.
Let $n$ be an integer such that $\sum_{i=n+1}^{\infty} \mu(B_i) < \frac{1}{10} \mu(\fish_{f,D})$. 
Let $\epsilon$ be the minimum of the side lengths of the boxes $B_1, \ldots, B_n$.
By Lemma~\ref{tech} we can find $M$ in $\mathcal{N}$ such that $\left \lVert p - D M p \right \rVert < \frac{\epsilon}{20}$
whenever $\left\lVert p \right\rVert < 2$. We have $M \fish_{f,D} = \fish_{f,D}$,
and $D  M  \fish_{f,D} \cap \fish_{f,D} = \emptyset$, so that 
$\mu(\fish_{f,D} \cup D  M  \fish_{f,D}) = 2\mu(\fish_{f,D})$. 
However $\fish_{f,D}$ can be split into the two sets 
\begin{eqnarray*}
\head & = & \fish_{f,D} \cap \left ( \bigcup_{i=1}^n B_i \right ), \\
\tail & = & \fish_{f,D} \setminus \head. 
\end{eqnarray*}
For $i = 1, \ldots n$, $B_i \cup D M B_i$ is contained in a box $B_i'$
obtained by adding lengths of $\frac{\epsilon}{20}$ at both ends of each side of $B_i$. Altogether this gives 
$$
\mu(\head \cup D M \head) \leq
\mu\left ( \bigcup_{i=1}^n B_i' \right )
\leq \left ( \frac{11}{10} \right )^3 
\mu\left ( \bigcup_{i=1}^n B_i \right )
\leq \left ( \frac{11}{10} \right )^4 \mu(\fish_{f,D}).
$$
To this we add $\mu(\tail)$ and $\mu(D M \tail)$, both less than 
$\frac{1}{10} \mu(\fish_{f,D})$.
We get  
$$\mu(\fish_{f,D} \cup D M \fish_{f,D}) < 2\mu(\fish_{f,D}),$$ 
a contradiction. 
Therefore $\mu(\fish_{f,D}) = 0$.
\end{proof}

\begin{proof}[Proof of Theorem~\ref{main}] 
Suppose that $A$ is compact and all sets in $\mathbb{R}^3$
are measurable. Then by Lemma~\ref{compacthom},
there exists a homomorphism $\phi: C \rightarrow A$.
This allows to define the functions $f_p$, 
$p \in \mathbb{S}_2\setminus F_\mathcal{G}$
and the sets $\fish_{f,D}$, $f \in V(A)^{V(\mathcal{U}(A))}$, 
$D \in \mathcal{D}$.
By Lemma~\ref{mezero}, these sets all have measure $0$,
and so does their finite union.
The set $\{\lambda p | \lambda \in (0,1], p \in F_{\mathcal{G}}\}$
also has measure $0$ since it can constructively be contained
in a countable collection of boxes of arbitrarily small measure.
Therefore there exists a point $p$ of $\mathbb{S}_2$ 
such that the line $\{\lambda p | \lambda \in (0,1]\}$ 
belongs to neither of these sets.
We then have $f_p = f_{D p}$ for all $D \in \mathcal{D}$. This implies that $f_p$ is a homomorphism
from $\mathcal{U}(A)$ to $A$, since collapsing each set 
$\{ (D p/\mathcal{N},S) | D \in \mathcal{D} \}$ to a single 
element yields a structure isomorphic to $\mathcal{U}(A)$.
\end{proof}

\section{Concluding comments} \label{problems} 
It is nice to see complexity classifications of constraint satisfaction problems align with axioms of set theory, 
with the most general axioms corresponding to the harder problems.
Perhaps this can be extended to promise constraint satisfactions 
as well:
\begin{samepage}
\begin{prob}\label{promise}
For $n \geq 3$, consider the following statement $S(n)$.
\begin{quote}
If all the finite subgraphs of a graph $G$ are $3$-colourable,
then $G$ is $n$-colourable.
\end{quote}
As an axiom, does it imply the ultrafilter axiom?
\end{prob}
\end{samepage}
The first statement $S(3)$ is the usual compactness of $K_3$, 
and L\"{a}uchli's original result~\cite{lauchli} is that
it implies the ultrafilter axiom. Recently,
Bodor~\cite{bodor} proved that $S(4)$ and $S(5)$
also imply the ultrafilter axiom. For the moment,
Problem~\ref{promise} is open for $n \geq 6$.
This aligns with complexity theory. It is known that if 
$P \neq NP$, there is no colouring heuristic that works in polynomial time and is guaranteed to output a proper colouring 
with at most $5$ colours when the input
is $3$-colourable~\cite{bbko}, and it is conjectured that no 
finite number of colours would suffice.
Without non-measurable sets, it is impossible to colour our 
Banach-Tarski graph $T$ with any finite number of colours, 
even though its finite subgraphs are all bipartite.
Therefore, each statement $S(n)$ of Problem~\ref{promise}, 
as an axiom, is outside of ZF.
The complexity conjecture somehow predicts that
it is equivalent to the ultrafilter axiom. 

In descriptive set theory, the concepts of Borel 
relational structures and Borel homomorphisms allow
to work with the axiom of choice rather than
weaker axioms, while still distinguishing the homomorphisms that
can be ``constructed'' from those that exist but cannot be seen.
For instance, the Banach-Tarski graph $T$
can be viewed as structure on $\mathbb{S}_2$ with two relations, adjacency and equivalence.  The set
$\{ \{p, Dp\} : p \in \mathbb{S}_2, D \in \{D_1, \ldots, D_b\}\}$
is closed in $\mathbb{S}_2^2$, so that removing
the countable set 
$\{ \{p, Dp\} : p \in F_{\mathcal{G}}, D \in \{D_1, \ldots, D_b\}\}$
yields a Borel set, corresponding to adjacency in $T$.
Similarly, $\{ \{p, Mp\} : p \in \mathbb{S}_2 \setminus 
F_{\mathcal{G}}, M \in \mathcal{N} \}$ is a countable union of
closed sets minus a countable set, hence a Borel set
corresponding to equivalence in $T$. This makes $T$ a 
Borel relational structure. (The countable set $F_{\mathcal{G}}$
is isolated.)
Assuming compactness, $T$ admits a $2$-colouring,
mapping equivalent points to the same colour and
adjacent points to different colours. However, by
Lemma~\ref{mezero}, for any proper colouring of $T$
with a finite number of colours, one colour class must 
be non-measurable,
hence not a Borel set. Thus $T$ does not admit a 
proper Borel colouring with a finite number of colours.
In a work that parallels Theorem~\ref{main},
Thornton~\cite{thornton2022algebraicapproachborelcsps} 
has shown that the finite relational structures of
width $1$ are precisely those for which
the existence of an arbitrary homomorphism
from a Borel relational structure into them corresponds 
to the existence of a Borel homomorphism.

\bibliographystyle{alphaurl}
\bibliography{w1nms}

\end{document}